\begin{document}
\title{A note related to Severi's finiteness theorem}
\author{Guoquan Gao}
\maketitle

\theoremstyle{plain}
\newtheorem{thm}{Theorem}[section]
\newtheorem{theorem}[thm]{Theorem}
\newtheorem{cor}[thm]{Corollary}
\newtheorem{corollary}[thm]{Corollary}
\newtheorem{lem}[thm]{Lemma}
\newtheorem{lemma}[thm]{Lemma}
\newtheorem{pro}[thm]{Proposition}
\newtheorem{proposition}[thm]{Proposition}
\newtheorem{prop}[thm]{Proposition}
\newtheorem{definition}[thm]{Definition}
\newtheorem{assumption}[thm]{Assumption}
\newtheorem{conjecture}[thm]{conjecture}

\theoremstyle{remark}
\newtheorem{remark}[thm]{Remark}
\newtheorem{example}[thm]{Example}
\newtheorem{remarks}[thm]{Remarks}
\newtheorem{problem}[thm]{Problem}
\newtheorem{exercise}[thm]{Exercise}
\newtheorem{situation}[thm]{Situation}
\newtheorem{Question}[thm]{Question}

\numberwithin{equation}{subsection}

\newcommand{\ZZ}{\mathbb{Z}}
\newcommand{\CC}{\mathbb{C}}
\newcommand{\QQ}{\mathbb{Q}}
\newcommand{\RR}{\mathbb{R}}
\newcommand{\HH}{\mathcal{H}}     

\newcommand{\ad}{\mathrm{ad}}            
\newcommand{\NT}{\mathrm{NT}}
\newcommand{\nonsplit}{\mathrm{nonsplit}}
\newcommand{\Pet}{\mathrm{Pet}}
\newcommand{\Fal}{\mathrm{Fal}}

\newcommand{\cs}{{\mathrm{cs}}}

\newcommand{\ZZn}{\mathbb{Z}[1/n]}
\newcommand{\ZZN}{\mathbb{Z}[1/N]}

\newcommand{\pair}[1]{\langle {#1} \rangle}
\newcommand{\wpair}[1]{\left\{{#1}\right\}}
\newcommand{\wh}{\widehat}
\newcommand{\wt}{\widetilde}

\newcommand\Spf{\mathrm{Spf}}

\newcommand{\lra}{{\longrightarrow}}

\newcommand{\matrixx}[4]
{\left( \begin{array}{cc}
  #1 &  #2  \\
  #3 &  #4  \\
 \end{array}\right)}        


\newcommand{\BA}{{\mathbb {A}}}
\newcommand{\BB}{{\mathbb {B}}}
\newcommand{\BC}{{\mathbb {C}}}
\newcommand{\BD}{{\mathbb {D}}}
\newcommand{\BE}{{\mathbb {E}}}
\newcommand{\BF}{{\mathbb {F}}}
\newcommand{\BG}{{\mathbb {G}}}
\newcommand{\BH}{{\mathbb {H}}}
\newcommand{\BI}{{\mathbb {I}}}
\newcommand{\BJ}{{\mathbb {J}}}
\newcommand{\BK}{{\mathbb {K}}}
\newcommand{\BL}{{\mathbb {L}}}
\newcommand{\BM}{{\mathbb {M}}}
\newcommand{\BN}{{\mathbb {N}}}
\newcommand{\BO}{{\mathbb {O}}}
\newcommand{\BP}{{\mathbb {P}}}
\newcommand{\BQ}{{\mathbb {Q}}}
\newcommand{\BR}{{\mathbb {R}}}
\newcommand{\BS}{{\mathbb {S}}}
\newcommand{\BT}{{\mathbb {T}}}
\newcommand{\BU}{{\mathbb {U}}}
\newcommand{\BV}{{\mathbb {V}}}
\newcommand{\BW}{{\mathbb {W}}}
\newcommand{\BX}{{\mathbb {X}}}
\newcommand{\BY}{{\mathbb {Y}}}
\newcommand{\BZ}{{\mathbb {Z}}}

\newcommand{\CA}{{\mathcal {A}}}
\newcommand{\CB}{{\mathcal {B}}}
\newcommand{\CD}{{\mathcal{D}}}
\newcommand{\CE}{{\mathcal {E}}}
\newcommand{\CF}{{\mathcal {F}}}
\newcommand{\CG}{{\mathcal {G}}}
\newcommand{\CH}{{\mathcal {H}}}
\newcommand{\CI}{{\mathcal {I}}}
\newcommand{\CJ}{{\mathcal {J}}}
\newcommand{\CK}{{\mathcal {K}}}
\newcommand{\CL}{{\mathcal {L}}}
\newcommand{\CM}{{\mathcal {M}}}
\newcommand{\CN}{{\mathcal {N}}}
\newcommand{\CO}{{\mathcal {O}}}
\newcommand{\CP}{{\mathcal {P}}}
\newcommand{\CQ}{{\mathcal {Q}}}
\newcommand{\CR }{{\mathcal {R}}}
\newcommand{\CS}{{\mathcal {S}}}
\newcommand{\CT}{{\mathcal {T}}}
\newcommand{\CU}{{\mathcal {U}}}
\newcommand{\CV}{{\mathcal {V}}}
\newcommand{\CW}{{\mathcal {W}}}
\newcommand{\CX}{{\mathcal {X}}}
\newcommand{\CY}{{\mathcal {Y}}}
\newcommand{\CZ}{{\mathcal {Z}}}

\newcommand{\ab}{{\mathrm{ab}}}
\newcommand{\Ad}{{\mathrm{Ad}}}
\newcommand{\an}{{\mathrm{an}}}
\newcommand{\Aut}{{\mathrm{Aut}}}

\newcommand{\Br}{{\mathrm{Br}}}
\newcommand{\bs}{\backslash}
\newcommand{\bbs}{\|\cdot\|}

\newcommand{\Ch}{{\mathrm{Ch}}}
\newcommand{\cod}{{\mathrm{cod}}}
\newcommand{\cont}{{\mathrm{cont}}}
\newcommand{\cl}{{\mathrm{cl}}}
\newcommand{\criso}{{\mathrm{criso}}}

\newcommand{\dR}{{\mathrm{dR}}}
\newcommand{\disc}{{\mathrm{disc}}}
\newcommand{\Div}{{\mathrm{Div}}}
\renewcommand{\div}{{\mathrm{div}}}

\newcommand{\Eis}{{\mathrm{Eis}}}
\newcommand{\End}{{\mathrm{End}}}

\newcommand{\Frob}{{\mathrm{Frob}}}

\newcommand{\Gal}{{\mathrm{Gal}}}
\newcommand{\GL}{{\mathrm{GL}}}
\newcommand{\GO}{{\mathrm{GO}}}
\newcommand{\GSO}{{\mathrm{GSO}}}
\newcommand{\GSp}{{\mathrm{GSp}}}
\newcommand{\GSpin}{{\mathrm{GSpin}}}
\newcommand{\GU}{{\mathrm{GU}}}
\newcommand{\BGU}{{\mathbb{GU}}}

\newcommand{\Hom}{{\mathrm{Hom}}}
\newcommand{\Hol}{{\mathrm{Hol}}}
\newcommand{\HC}{{\mathrm{HC}}}

\renewcommand{\Im}{{\mathrm{Im}}}
\newcommand{\Ind}{{\mathrm{Ind}}}
\newcommand{\inv}{{\mathrm{inv}}}
\newcommand{\Isom}{{\mathrm{Isom}}}

\newcommand{\Jac}{{\mathrm{Jac}}}
\newcommand{\JL}{{\mathrm{JL}}}

\newcommand{\Ker}{{\mathrm{Ker}}}
\newcommand{\KS}{{\mathrm{KS}}}

\newcommand{\Lie}{{\mathrm{Lie}}}

\newcommand{\new}{{\mathrm{new}}}
\newcommand{\NS}{{\mathrm{NS}}}

\newcommand{\ord}{{\mathrm{ord}}}
\newcommand{\ol}{\overline}

\newcommand{\rank}{{\mathrm{rank}}}

\newcommand{\PGL}{{\mathrm{PGL}}}
\newcommand{\PSL}{{\mathrm{PSL}}}
\newcommand{\Pic}{\mathrm{Pic}}
\newcommand{\Prep}{\mathrm{Prep}}
\newcommand{\Proj}{\mathrm{Proj}}

\newcommand{\Picc}{\mathcal{P}ic}

\renewcommand{\Re}{{\mathrm{Re}}}
\newcommand{\red}{{\mathrm{red}}}
\newcommand{\sm}{{\mathrm{sm}}}
\newcommand{\sing}{{\mathrm{sing}}}
\newcommand{\reg}{{\mathrm{reg}}}

\newcommand{\tor}{{\mathrm{tor}}}
\newcommand{\tr}{{\mathrm{tr}}}

\newcommand{\ur}{{\mathrm{ur}}}

\newcommand{\vol}{{\mathrm{vol}}}

\newcommand{\ds}{\displaystyle}

\tableofcontents

\section{Introduction}
In this note, we consider a question (Question \ref{nor}) related to the high-dimensional generalization of the classical Severi's finiteness theorem for curves. We will introduce some background and then state the main result (Theorem \ref{mai}). For simplicity, all the varieties are defined over $\CC$ throughout this note. Moreover, all the morphisms between complex varieties are $\CC$-morphisms. \medskip

 We first recall three classical theorems of curves. A well-known theorem of Hurwitz considering the automorphisms of a curve is the following.
\begin{theorem}{\rm(Hurwitz)}\label{g>1}
If $X$ is a smooth projective curve over  $\CC $ of genus $g\geq2$, then the automorphism group $Aut(X)$ of $X$ is finite. Moreover, the order of $Aut(X)$ is at most $84(g-1)$.
\end{theorem}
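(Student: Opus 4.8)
The plan is to split the argument into two essentially independent parts: first showing that $\Aut(X)$ is finite, and then extracting the explicit bound $84(g-1)$ from the Riemann--Hurwitz formula applied to the quotient of $X$ by $\Aut(X)$.

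For finiteness, I would use that $\Aut(X)$ is representable by a group scheme locally of finite type over $\CC$, whose Lie algebra is the space $H^0(X, T_X)$ of global regular vector fields. Since $T_X \cong \CO_X(-K_X)$ has degree $2 - 2g < 0$ for $g \geq 2$, there are no nonzero global vector fields, so $H^0(X, T_X) = 0$ and $\Aut(X)$ is $0$-dimensional (and reduced, since we are in characteristic zero). To upgrade ``$0$-dimensional'' to ``finite'', I would realize $\Aut(X)$ as a closed algebraic subgroup of a projective linear group: by Riemann--Roch the tricanonical bundle $3K_X$ has degree $6g - 6 \geq 2g + 1$, hence is very ample, giving an embedding $X \hookrightarrow \BP^M$. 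Every automorphism preserves $K_X$, so it acts linearly on $H^0(X, 3K_X)$ and induces an element of $\PGL_{M+1}(\CC)$ stabilizing the image of $X$; conversely any such projective transformation restricts to an automorphism. Thus $\Aut(X)$ is isomorphic to the stabilizer of $X$ in $\PGL_{M+1}$, a closed subgroup scheme, which being closed (hence of finite type) and $0$-dimensional must be finite. (Alternatively, over $\CC$ one may invoke uniformization $X \cong \BH/\Gamma$ and the finiteness of the isometry group of a compact hyperbolic surface.)

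For the bound, let $G = \Aut(X)$ with $|G| = N$, form the quotient curve $Y = X/G$ of genus $h$, and let $\pi\colon X \to Y$ be the degree-$N$ quotient map. Since $G$ acts transitively on each fiber, all points over a given branch point $Q_i \in Y$ share one ramification index $e_i$, and Riemann--Hurwitz reads
\[
2g - 2 \;=\; N\left[\,(2h-2) + \sum_{i=1}^{k}\Bigl(1 - \tfrac{1}{e_i}\Bigr)\right],
\]
where $Q_1,\dots,Q_k$ are the branch points and each $e_i \geq 2$. Writing $R$ for the bracketed quantity, the hypothesis $g \geq 2$ forces $R > 0$ and $N = (2g-2)/R$. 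The crux is the purely arithmetic fact that the smallest positive value of $R$, as the admissible data $(h; e_1,\dots,e_k)$ vary, is $\tfrac{1}{42}$, realized by $h=0$, $k=3$, $(e_1,e_2,e_3)=(2,3,7)$; granting this, $N \leq 42(2g-2) = 84(g-1)$.

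The one genuinely delicate step is this minimization, which I would settle by a finite case analysis: for $h \geq 2$ one has $R \geq 2$; for $h = 1$ positivity forces $k \geq 1$ and $R \geq \tfrac{1}{2}$; and for $h = 0$ one needs $\sum_i(1 - 1/e_i) > 2$, which excludes $k \leq 2$, while for $k = 3$ minimizing $R$ amounts to maximizing $1/e_1 + 1/e_2 + 1/e_3$ subject to its being $< 1$, whose optimum $\tfrac{1}{2} + \tfrac{1}{3} + \tfrac{1}{7} = \tfrac{41}{42}$ yields $R = \tfrac{1}{42}$, and for $k \geq 4$ a direct estimate gives $R \geq \tfrac{1}{6}$. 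Comparing the cases shows $\tfrac{1}{42}$ is the global minimum. The main obstacle is therefore not conceptual but combinatorial bookkeeping: verifying that no competing small triple $(e_1,e_2,e_3)$ beats $(2,3,7)$ and assembling the case analysis cleanly.
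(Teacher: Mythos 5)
The paper does not prove this statement: Theorem \ref{g>1} is quoted as classical background in the introduction, with no argument supplied, so there is no in-paper proof to compare against. Your proposal is the standard textbook proof and it is correct: finiteness via $H^0(X,T_X)=0$ (degree $2-2g<0$) plus the tricanonical embedding realizing $\Aut(X)$ as a closed zero-dimensional subgroup of $\PGL_{M+1}$, and the bound $84(g-1)$ via Riemann--Hurwitz for $X\to X/G$ together with the arithmetic fact that the minimal positive value of $(2h-2)+\sum_i(1-1/e_i)$ is $1/42$, attained at $(h;e_1,e_2,e_3)=(0;2,3,7)$. The case analysis you sketch is complete in outline (the only triples needing a check at $h=0$, $k=3$ are those with $e_1=2$, $e_2=3$, where positivity forces $e_3\geq 7$), so nothing essential is missing.
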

A relative version of the Theorem \ref{g>1} is the following de Franchis' theorem, which is also classical.
\begin{theorem}{\rm(de Franchis)}\label{de}
Let $X,Y$ be two smooth projective curves over $\CC$, then if the genus of $Y$ is greater than 1, the set of finite morphisms from $X$ to $Y$ is finite.
\end{theorem}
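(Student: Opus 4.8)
The plan is to exhibit the set of finite morphisms $f\colon X\to Y$ as the set of $\CC$-points of a scheme $\Hom(X,Y)$ and to prove that scheme is finite by showing it is simultaneously of finite type over $\CC$ and zero-dimensional. The finite-type (boundedness) input will come from a uniform bound on the degree of $f$, and the zero-dimensionality from an infinitesimal rigidity statement; the hypothesis $g_Y\ge 2$ will enter in both, but decisively in the second.

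First I would reduce to the non-constant case, since a morphism of smooth projective curves is finite exactly when it is non-constant. For such an $f$ of degree $d$, the Riemann--Hurwitz formula gives
\[
2g_X-2=d\,(2g_Y-2)+\deg R,
\]
with $R\ge 0$ the ramification divisor. As $g_Y\ge 2$ forces $2g_Y-2\ge 2>0$, this yields $d\le (2g_X-2)/(2g_Y-2)$; in particular, if $g_X\le 1$ the set of finite morphisms is empty and there is nothing to prove. Hence every finite morphism $X\to Y$ has degree bounded by an explicit constant depending only on $g_X$ and $g_Y$.

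Next I would pass to graphs: to $f$ I associate $\Gamma_f\subset X\times Y$, a smooth curve projecting isomorphically onto $X$ and with degree $d$ onto $Y$, so that its Hilbert polynomial relative to a fixed polarization on $X\times Y$ is determined by $d$. By the degree bound, all such graphs lie in the finitely many Hilbert schemes $\mathrm{Hilb}^{P_d}(X\times Y)$, each projective over $\CC$; the locus of those subschemes that are graphs of morphisms is the scheme $\Hom(X,Y)$, which is thus of finite type over $\CC$. For rigidity, the Zariski tangent space at $[f]$ is $H^0(\Gamma_f,N_{\Gamma_f/X\times Y})\cong H^0(X,f^*T_Y)$. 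Since $T_Y=\Omega_Y^\vee$ has degree $2-2g_Y<0$, the bundle $f^*T_Y$ has negative degree $d(2-2g_Y)<0$ and therefore no nonzero global sections, so the tangent space vanishes. Every point of $\Hom(X,Y)$ is then isolated and reduced, the scheme is zero-dimensional, and a zero-dimensional scheme of finite type over $\CC$ has only finitely many points.

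The main obstacle, I expect, is not either computation in isolation but the bookkeeping that lets them combine: one must know that morphisms of bounded degree genuinely form a bounded, finite-type family (via representability of the Hom functor and the fixed Hilbert polynomial) before zero-dimensionality can be upgraded to finiteness. The tangent-space calculation is short, but it is the sole place where $g_Y\ge 2$ is used in an essential way---through the negativity of $\deg f^*T_Y$---and it is exactly this step that breaks down for $g_Y\le 1$, where moduli of morphisms can be positive-dimensional.
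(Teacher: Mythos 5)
The paper cites de Franchis' theorem as classical and gives no proof of it, so there is no internal argument to compare yours against; I can only assess your proposal on its own terms. It is correct, and it is one of the standard modern proofs: Riemann--Hurwitz gives the degree bound $d\le (2g_X-2)/(2g_Y-2)$ (and emptiness unless $g_X\ge 2$), the graphs $\Gamma_f\subset X\times Y$ then land in finitely many Hilbert schemes so that the relevant part of $\Hom(X,Y)$ is of finite type, and the tangent-space identification $T_{[f]}\Hom(X,Y)\cong H^0(\Gamma_f,N_{\Gamma_f/X\times Y})\cong H^0(X,f^*T_Y)=0$, forced by $\deg f^*T_Y=d(2-2g_Y)<0$, makes every point isolated and reduced. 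Each ingredient you invoke (representability of the Hom functor as an open subscheme of the Hilbert scheme, constancy of the Hilbert polynomial of the graph in terms of $d$ and $g_X$, vanishing of sections of a negative-degree line bundle on an integral projective curve) is standard, and your identification of the single place where $g_Y\ge 2$ is used essentially is accurate.
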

We note that, once there is a finite morphism $f: X\to Y$, by the Hurwitz's formula, the genus of $X$ is greater than or equal to the genus of $Y$. Moreover, if $X,Y$ have the same genus (greater than 1), then $\deg f=1$ and $f$ must be an isomorphism. Hence we can recover Theorem \ref{g>1} if we take $X=Y$ in Theorem \ref{de}.

More generally, if we fix an $X$ and vary $Y$, we have the following finiteness theorem due to Severi.
\begin{theorem}{\rm(Severi)}\label{Sev}
Let $X$ be a smooth projective curve over $\CC$. Then up to equivalence, there are only finitely many pairs $(Y,f)$, where $Y$ is a smooth projective curve over $\CC$ with genus greater than 1, and $f:X\to Y$ is a finite morphism. Here, we say two pairs $(Y,f)$ and $(Y',f')$ are equivalent if there is an isomorphism $h: Y\to Y'$ such that $f'=h\circ f$.
\end{theorem}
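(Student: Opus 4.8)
The plan is to combine a boundedness step coming from Riemann--Hurwitz with an infinitesimal rigidity statement for the pairs $(Y,f)$, and then to finish by quasi-compactness of a single finite-type parameter space. First I would record that the discrete invariants are bounded: if $f\colon X\to Y$ is finite of degree $d$ and $Y$ has genus $g\ge 2$, then the Hurwitz formula $2g_X-2=d(2g-2)+\deg R$ with $\deg R\ge 0$ and $2g-2\ge 2$ forces $1\le d\le g_X-1$ and $2\le g\le g_X$. Hence only finitely many pairs of invariants $(d,g)$ occur, and it suffices to fix $(d,g)$ and bound the number of equivalence classes realizing them.

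Next I would place all such pairs into one finite-type family. For fixed $g\ge 2$ the tricanonical system embeds every genus-$g$ curve $Y$ into $\BP^N$ with $N=5g-6$ and fixed Hilbert polynomial, canonically up to the action of $\PGL_{N+1}$. Composing with $f$ sends $(Y,f)$ to $\psi=\iota\circ f\colon X\to\BP^N$, a morphism with $\deg\psi^*\CO(1)=6d(g-1)=:\delta$ fixed. The morphisms $X\to\BP^N$ of degree $\delta$ form a quasi-projective (finite type) scheme $\Hom_\delta(X,\BP^N)$, and the locus $Z$ of those $\psi$ arising from a pair as above is a locally closed $\PGL_{N+1}$-invariant subscheme. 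Because $f$ is dominant, an equivalence $h\colon Y\to Y'$ with $f'=h\circ f$ is unique; thus equivalence classes of pairs correspond bijectively to $\PGL_{N+1}$-orbits on $Z$, all with trivial stabilizer. I have therefore reduced the theorem to the assertion that $Z$ consists of finitely many $\PGL_{N+1}$-orbits.

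The heart of the matter is to show each orbit is open in $Z$; since $Z$ is quasi-compact, a partition into disjoint open orbits must then be finite. As the stabilizers are trivial, each orbit has dimension $\dim\PGL_{N+1}$, so it is enough to prove $\dim T_{[\psi]}Z=\dim\PGL_{N+1}$, i.e. that the only first-order deformations of $\psi$ inside $Z$ come from moving the projective frame. Equivalently I must show that the pair $(Y,f)$, with $X$ held fixed, is infinitesimally \emph{rigid}. Deformation theory identifies these deformations with $\ker\bigl(\alpha\colon H^1(Y,T_Y)\to H^1(X,f^*T_Y)\bigr)$: a first-order deformation of $Y$ extends to a deformation of the map with source fixed if and only if its Kodaira--Spencer class dies in $H^1(X,f^*T_Y)$, and when it extends the extension is unique because $H^0(X,f^*T_Y)=0$ (indeed $\deg f^*T_Y=d(2-2g)<0$). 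So rigidity amounts exactly to the injectivity of $\alpha$.

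Finally I would verify that $\alpha$ is injective by Serre duality: its dual is the trace map $\beta\colon H^0(X,\omega_X\otimes f^*\omega_Y)\to H^0(Y,\omega_Y^{\otimes 2})$, so it suffices to see $\beta$ surjective. In characteristic zero the trace splits the natural inclusion $\omega_Y\hookrightarrow f_*\omega_X$, since their composite is multiplication by $d\ne 0$, giving a direct-sum decomposition $f_*\omega_X\cong\omega_Y\oplus\CE$ of $\CO_Y$-modules; tensoring by $\omega_Y$ and taking global sections exhibits $\beta$ as the projection onto the summand $H^0(Y,\omega_Y^{\otimes 2})$, which is visibly surjective. Hence $\alpha$ is injective, the pair is rigid, every orbit is open, and the finiteness follows. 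The step I expect to be the main obstacle is precisely this rigidity: setting up the deformation identification with $\ker\alpha$ correctly (source fixed while the target and map vary, modulo target isomorphisms, using $H^0(Y,T_Y)=0$) and then the trace-splitting that forces injectivity of $\alpha$; by comparison the boundedness and the parameter-space bookkeeping are routine.
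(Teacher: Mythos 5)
Your argument is correct, but it takes a genuinely different route from the paper, which in fact never proves Theorem \ref{Sev} directly: the paper states it as classical background and then observes that it is recovered as the one-dimensional case of Corollary \ref{iso}, itself deduced from the resolved Iitaka--Severi conjecture (Theorem \ref{IS}, quoted from Guerra--Pirola) together with the fact that smooth projective varieties which are birational, hyperbolic and of general type are isomorphic; for curves this collapses to the facts that genus $\ge 2$ implies hyperbolic and of general type and that birational smooth projective curves are isomorphic. You instead give a direct, bottom-up proof: Riemann--Hurwitz bounds the invariants $(d,g)$, the tricanonical embedding packs all pairs into one finite-type $\Hom$-scheme with a $\PGL_{N+1}$-action and trivial stabilizers, and infinitesimal rigidity of $(Y,f)$ --- injectivity of $H^1(Y,T_Y)\to H^1(X,f^*T_Y)$, obtained by Serre-dualizing to the trace map and using the characteristic-zero splitting $f_*\omega_X\cong\omega_Y\oplus\CE$ --- makes every orbit open, hence the orbits finite in number. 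This is essentially the classical deformation-theoretic proof of de Franchis--Severi, and your computation ($\ker\alpha=0$ together with $H^0(X,f^*T_Y)=0$) also yields Theorem \ref{de} along the way, mirroring the paper's remark that Severi amounts to de Franchis plus finiteness of the possible targets. The paper's top-down route buys the full higher-dimensional statement at the cost of citing a hard general theorem; yours is elementary and self-contained but specific to curves. One point to tighten: the identification of $T_{[\psi]}Z$ with the deformation space of the pair requires either giving $Z$ its functorial scheme structure or, more simply, running the tangent-space bound at a general point of each irreducible component of $Z$ and counting components (each orbit closure is then a component) rather than asserting openness of every orbit; this is a routine repair and does not affect the correctness of the argument.
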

Theorem \ref{Sev} implies that, up to isomorphism, there are only finitely many smooth projective curve $Y$ such that the genus $g(Y)>1$ and $Y$ is the image of some finite morphism $f:X\to Y$. Conversely, if we assume Theorem \ref{de}, this property also implies Theorem \ref{Sev}.\medskip

 Now we consider the generalization of de Franchis' Theorem \ref{de} and Severi's Theorem \ref{Sev} to the higher dimension.

 There are two high-dimensional generalizations of the curves of genus greater than 1, namely, the notion of \emph{hyperbolicity} and the notion of \emph{general type}.

There are several notions about the hyperbolicity. In this note we only use the notion \emph{Brody hyperbolicity}. A complex space $X$ is called \emph{Brody hyperbolic} if there is no nonconstant holomorphic map $h:\CC\to X$ (this kind of map is also called an \emph{entire curve} on $X$). Another very useful definition of hyperbolicity using an intrinsic metric on $X$ is due to Kobayashi \cite{Kob}, who is regarded as the founder of this field. When $X$ is projective (which always holds in this note), it is \emph{Brody hyperbolic} if and only if it is \emph{Kobayashi hyperbolic}. In the following, we just use \emph{hyperbolic} instead of \emph{Brody hyperbolic}.

The notion \emph{general type} is more standard. We say a smooth projective complex variety $X$ is of \emph{general type} if the canonical sheaf of $X$ is big. For a general projective complex variety $X$, we say $X$ is of \emph{general type} if $X$ is birational to a smooth projective variety of general type. Note that every complex variety has a desingularization due to the Hironaka's resolution of singularity. Moreover, if two smooth projective varieties $X$ and $Y$ are birational, then $X$ is of \emph{general type} if and only if $Y$ is. So the notion of \emph{general type} is a birational invariant.

Here is a high-dimensional generalization of Theorem \ref{de}. The hyperbolic case is due to Noguchi (cf. \cite[Thm. A]{Nog1}) and the general type case is due to Kobayashi and Ochiai (cf. \cite[Thm. 7.6.1]{Kob}).
\begin{theorem}{\rm (Noguchi, Kobayashi-Ochiai)}\label{Nog}
Let $X$ and $Y$ be projective varieties over $\CC$. If $Y$ is either hyperbolic or of general type, then there are only finitely many dominant rational maps from $X$ to $Y$.
\end{theorem}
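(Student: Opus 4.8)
The plan is to translate the problem into a statement about graphs and then to combine two independent inputs: a \emph{boundedness} statement, ensuring that the graphs of all dominant rational maps $X \dashrightarrow Y$ sweep out only finitely many algebraic families, and a \emph{rigidity} statement, ensuring that within each such family a dominant map cannot move. To each dominant rational map $f \colon X \dashrightarrow Y$ I associate the closure $\Gamma_f \subseteq X \times Y$ of its graph; since $f$ is determined by $\Gamma_f$, it suffices to prove that only finitely many such $\Gamma_f$ occur. I would first reduce to the case of morphisms from a smooth source by resolving the indeterminacy (or, for $Y$ hyperbolic, by invoking the extension theorems of Kobayashi across the indeterminacy locus), and I may cut $X$ by general very ample divisors to arrange $\dim X = \dim Y$, under which a dominant map becomes generically finite of some degree $\deg f$.

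For boundedness, the key observation is that both hypotheses force the Kobayashi--Eisenman intrinsic measure $\Psi_Y$ on $Y$ to be nondegenerate on a dense Zariski-open set: a hyperbolic variety is measure hyperbolic, and a variety of general type is measure hyperbolic by the theorem of Kobayashi--Ochiai. Since holomorphic maps are volume-nonincreasing for these intrinsic measures (a Schwarz-type lemma), integrating the inequality $f^\ast \Psi_Y \le \Psi_X$ over $X$ yields $\deg f \cdot \mathrm{vol}(Y) \le \mathrm{vol}(X)$, so $\deg f$ is bounded independently of $f$ (and if $\mathrm{vol}(X)=0$ there are no dominant maps at all). A degree bound bounds the cohomology class of $\Gamma_f$ with respect to fixed ample classes on $X$ and $Y$, hence bounds its Hilbert polynomial; therefore all the $\Gamma_f$ lie in finitely many irreducible components of the Chow variety (equivalently, the Hilbert scheme) of $X \times Y$, each of which is itself a projective variety.

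It remains to show rigidity: no positive-dimensional component of this parameter space can consist of graphs of dominant rational maps. Suppose $\{f_t\}_{t \in T}$ were such a nontrivial family, giving a dominant rational map $F \colon X \times T \dashrightarrow Y$. For $Y$ of general type I would derive a contradiction from the bigness of $K_Y$: pulling back pluricanonical forms along the $f_t$ and differentiating in $t$ produces sections that are incompatible with $Y$ having no nonzero global vector fields and with $K_Y$ being big, which is essentially the infinitesimal content of the Kobayashi--Ochiai argument. For $Y$ hyperbolic I would instead use the deformation directly: evaluating the family at a generic point $x_0 \in X$ gives a nonconstant holomorphic map from (a curve in) $T$ into $Y$, and a Brody reparametrization argument upgrades a nontrivial such family into a nonconstant entire curve $\CC \to Y$, contradicting Brody hyperbolicity; equivalently, $\Hol(X,Y)$ is compact for hyperbolic $Y$ and its dominant members are isolated.

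I expect the rigidity step to be the main obstacle, and in particular the hyperbolic case to be the most delicate: one must guarantee that a genuinely varying family produces a \emph{nonconstant} entire curve rather than a constant limit, and one must handle the indeterminacy of rational maps uniformly across the family so that the evaluation maps are well defined on a common open set. A secondary technical point is the dimension reduction in the boundedness step: when $\dim X > \dim Y$ the pullback of a volume form degenerates, so the degree bound should be extracted by restricting to general complete-intersection subvarieties of $X$ of dimension $\dim Y$, and one must check that this restriction still separates distinct maps well enough to conclude.
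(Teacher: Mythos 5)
The paper does not prove this theorem: it is quoted as a known result, with the hyperbolic case attributed to Noguchi \cite[Thm.\ A]{Nog1} and the general type case to Kobayashi--Ochiai \cite[Thm.\ 7.6.1]{Kob}. So your proposal can only be measured against the standard published arguments, and there it has one genuine gap, in the boundedness step. Your degree bound $\deg f\cdot\mathrm{vol}(Y)\le\mathrm{vol}(X)$ via the volume-decreasing property of the Kobayashi--Eisenman measure is fine (for generically finite $f$ between equidimensional varieties, $\mathrm{vol}(X)<\infty$ by compactness and $\mathrm{vol}(Y)>0$ by measure hyperbolicity). But the next sentence --- ``a degree bound bounds the cohomology class of $\Gamma_f$ \ldots hence bounds its Hilbert polynomial'' --- is false. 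The topological degree of $f$ controls only the top mixed intersection number $\Gamma_f\cdot p_2^*H_Y^{\dim Y}$; the intermediate numbers $\Gamma_f\cdot p_1^*H_X^{\,i}\cdot p_2^*H_Y^{\,j}$ are not bounded by it. Concretely, Cremona transformations of $\BP^2$ all have topological degree $1$ while their graphs have unbounded class, so the inference ``bounded degree $\Rightarrow$ finitely many Chow components'' needs a different input. In the general type case the correct input is to bound the \emph{other} end of the sequence, namely $f^*A\cdot H_X^{\dim X-1}$ for an ample $A$ with $mK_Y\ge A$, using the injection of pluricanonical forms $f^*\omega_Y^{\otimes m}\hookrightarrow\omega_X^{\otimes m}$; combined with the Khovanskii--Teissier log-concavity of the mixed intersection numbers (which bounds all $a_i$ once $a_0$ and $a_1$ are bounded, but \emph{not} once $a_0$ and $a_{\dim Y}$ are bounded), this does yield boundedness of the graphs.

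The hyperbolic case is worse for your strategy, because a hyperbolic projective variety is not known to be of general type, so no pluricanonical bound is available and the Chow-variety route stalls at the same point. Noguchi's actual proof sidesteps boundedness of graphs entirely: the space of dominant meromorphic maps into a compact hyperbolic space is itself compact (a nontrivial normal-families-plus-extension theorem for meromorphic maps), and its points are rigid by the Brody reparametrization argument you sketch, so compact plus discrete gives finite. Your rigidity paragraph is essentially the right idea in both cases (and you correctly flag the danger of a varying family degenerating to a constant entire curve), but as written the proposal proves finiteness only modulo the unproven claim that the graphs form a bounded family.
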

Next we consider the generalization of Theorem \ref{Sev}. For this, there is a resolved Iitaka-Severi conjecture (cf. \cite[Thm. 4.1]{GP}):
\begin{theorem}\label{IS}
Let $X$ be a projective variety over $\CC$. Then up to equivalence, there are only finitely many pairs $(Y,f)$, where $Y$ is a projective variety of general type over $\CC$, and $f:X\dashrightarrow Y$ is a dominant rational map. Here, we say two pairs $(Y,f)$ and $(Y',f')$ are equivalent if there is a birational map $h: Y\dashrightarrow Y'$ such that $f'=h\circ f$.
\end{theorem}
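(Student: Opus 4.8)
The plan is to deduce the theorem from the de Franchis--Noguchi finiteness (Theorem \ref{Nog}) together with a boundedness statement for the targets. The first observation is that it suffices to show the targets $Y$ range over only finitely many \emph{birational} classes. Indeed, fix a birational representative $Y_0$ of such a class. Every pair with target birational to $Y_0$ is equivalent to one of the form $(Y_0,g)$ with $g\colon X\dashrightarrow Y_0$ a dominant rational map, and two such pairs are equivalent precisely when they differ by an element of $\mathrm{Bir}(Y_0)$. Since Theorem \ref{Nog} bounds the number of dominant rational maps $X\dashrightarrow Y_0$ by a finite number, the quotient of this finite set by $\mathrm{Bir}(Y_0)$ is finite, so each birational class of targets contributes only finitely many equivalence classes of pairs. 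Hence finiteness of the set of pairs up to equivalence is equivalent to finiteness of the set of birational classes of targets.

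Next I would reduce to the case where $X$ itself is of general type. Any dominant rational map from $X$ to a variety of general type factors birationally through the Iitaka fibration $X\dashrightarrow X^{\ast}$, whose image is a model of the canonical model of $X$ and is of general type of dimension $\kappa(X)$ (here one uses Viehweg's additivity of the Kodaira dimension over bases of general type, which forces $\dim Y\le\kappa(X)$). Thus every target is dominated by $X^{\ast}$, and conversely any target dominated by $X^{\ast}$ is dominated by $X$, so $X$ and $X^{\ast}$ determine the same set of target birational classes. From now on I may therefore assume $X$ is smooth projective of general type of dimension $n$, and that every target $Y$ satisfies $\dim Y\le n$.

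Now stratify the targets by $d=\dim Y\in\{1,\dots,n\}$. For fixed $d$ I would first bound the volume $\vol(K_Y)$ by a constant $V$ depending only on $X$ and $d$: resolving $f$ to a morphism $\tilde f\colon \tilde X\to Y$ and intersecting with $n-d$ general very ample divisors produces a $d$-dimensional subvariety mapping generically finitely and dominantly onto $Y$, and in the generically finite case $\vol(K_Y)$ is bounded by the volume upstairs; the fibre-space part is controlled through Viehweg's weak positivity of the direct images $\tilde f_{\ast}(mK_{\tilde X/Y})$. With the volume bounded, the uniform effective birationality of pluricanonical maps (a single $m=m(d)$ making $|mK_Y|$ birational for all general-type $Y$ of dimension $d$) together with the boundedness of canonical models of general type of bounded volume (Hacon--McKernan--Xu) embeds every $Y_{\mathrm{can}}$ into one fixed projective space $\mathbb{P}^{N}$ as a member of a bounded family. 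Passing to the graphs $\Gamma_f\subseteq X\times\mathbb{P}^{N}$, these then form a bounded family and hence lie in finitely many irreducible components of a Hilbert scheme.

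Finally I would upgrade this boundedness to finiteness using Theorem \ref{Nog} once more. Over each Hilbert component the universal family gives an algebraic family of pairs $(Y_t,f_t)$ with $f_t\colon X\dashrightarrow Y_t$ and first projection $\Gamma_{f_t}\to X$ birational. If the targets in a component are birationally isotrivial, then fixing one birational model and applying Theorem \ref{Nog} shows the resulting maps $X\dashrightarrow Y$ take only finitely many values, so the component contributes only finitely many equivalence classes. The crux, and the step I expect to be the main obstacle, is twofold: first, making the volume bound genuinely independent of the $f$-dependent resolution $\tilde X$ (so that the constant $V$ truly depends only on $X$ and $d$); and second, ruling out a positive-dimensional, birationally \emph{non}-isotrivial family of general-type targets all dominated by the fixed $X$. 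The latter is a relative rigidity phenomenon for maps into varieties of general type: such a family would give a non-isotrivial family of canonically polarized varieties whose base is dominated in a way incompatible with the positivity of moduli, and I would close the argument by invoking Viehweg-type (Viehweg--Zuo, Kebekus--Kovács) hyperbolicity of the moduli of canonically polarized varieties.
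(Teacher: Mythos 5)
A preliminary remark: the paper does not prove Theorem \ref{IS}; it quotes it as \cite[Thm. 4.1]{GP}, so there is no internal proof to compare against. Measured against the Guerra--Pirola argument (which builds on Maehara), your architecture is the right one: reduce to finiteness of birational classes of targets via Theorem \ref{Nog} (your first paragraph is correct and is exactly the standard reduction), then prove birational boundedness of the targets, then kill positive-dimensional families by rigidity. Your boundedness step is a modernized variant of theirs: where you bound $\vol(K_Y)$ by cutting with hyperplanes and then invoke Hacon--McKernan--Xu, the cited proof bounds the plurigenera directly through the injection $H^0(Y',mK_{Y'})\hookrightarrow H^0(X',(\Omega^d_{X'})^{\otimes m})$ for one fixed resolution $X'$ of $X$, which sidesteps the resolution-dependence you correctly flag: that bound manifestly depends only on $X$, $m$ and $d=\dim Y$, and it already puts the $r_d$-canonical images in a bounded family of projective spaces without needing the (much later, much heavier) boundedness of canonical models.

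The genuine gap is the last step. After placing the graphs in finitely many Hilbert/Chow components you need: an irreducible algebraic family $\{(Y_t,f_t)\}_{t\in T}$ of dominant rational maps from the fixed $X$ to varieties of general type contributes only finitely many equivalence classes. Deducing this from Viehweg--Zuo/Kebekus--Kov\'acs hyperbolicity of moduli does not work: the base $T$ is an arbitrary quasi-projective parameter variety with no special geometry, and hyperbolicity of the moduli stack forbids maps from $\CC$ or from special bases, not non-isotrivial families over a general $T$; moreover the $Y_t$ are of general type but not canonically polarized, and their canonical models fall outside the classical Viehweg--Zuo setting. What is actually needed, and what \cite{GP} supplies, is a rigidity theorem for families of dominant maps into varieties of general type (a relative Kobayashi--Ochiai): the equivalence class of $(Y_t,f_t)$ is locally constant in $t$, proved directly, e.g.\ by spreading out and applying Theorem \ref{Nog} over the generic point, or by the infinitesimal argument that a variety of general type admits no nonzero "vertical" deformations of a dominant map. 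Without that ingredient, boundedness does not yield finiteness. (A smaller issue: your reduction to $X$ of general type via the Iitaka fibration is not immediate, since the image in $Y$ of the $\kappa=0$ general fibre need not be of general type, so $C^+_{n,m}$ does not apply verbatim; but that reduction is dispensable.)
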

Note that we do not need $X, Y$ to be smooth, since the equivalence in the theorem is the birational equivalence. On the other hand, if $Y, Y'$ are smooth, both hyperbolic and of general type, then $Y$ and $Y'$ are birational if and only if they are isomorphic. This follows from \cite[Cor. 6.3.10]{Kob} or the algebraic version \cite[Lem. 3.2, Lem. 3.5]{JK}. From this we can deduce the following corollary considering the above pairs up to isomorphism (instead of birational equivalence):
\begin{corollary}\label{iso}
Let $X$ be a projective variety over $\CC$, then up to equivalence, there are only finitely many pairs $(Y,f)$, where $Y$ is a \textbf{smooth} projective variety over $\CC$ that is both hyperbolic and of general type, and $f:X\to Y$ is a surjective morphism. Here, we say two pairs $(Y,f)$ and $(Y',f')$ are equivalent if there is an isomorphism $h: Y\to Y'$ such that $f'=h\circ f$.
\end{corollary}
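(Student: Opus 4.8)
The plan is to deduce the corollary from the resolved Iitaka--Severi conjecture (Theorem \ref{IS}) by comparing its equivalence relation with the one appearing there. The key observation to extract from the hypotheses is that every pair $(Y,f)$ considered in the corollary is also a pair of the type handled by Theorem \ref{IS}: a smooth projective $Y$ that is hyperbolic and of general type is in particular of general type, and a surjective morphism $f\colon X\to Y$ is in particular a dominant rational map $f\colon X\dashrightarrow Y$. Write $\sim_{\mathrm{iso}}$ for the equivalence relation of the corollary (two pairs related by an isomorphism $h\colon Y\to Y'$ with $f'=h\circ f$) and $\sim_{\mathrm{bir}}$ for that of Theorem \ref{IS} (two pairs related by a birational map $h\colon Y\dashrightarrow Y'$ with $f'=h\circ f$). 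By Theorem \ref{IS} the set of $\sim_{\mathrm{bir}}$-classes of such pairs is finite. I would then define the natural comparison map $\Phi$ sending the $\sim_{\mathrm{iso}}$-class of a corollary pair $(Y,f)$ to its $\sim_{\mathrm{bir}}$-class inside that finite set. This $\Phi$ is well defined because an isomorphism is in particular a birational map and an equality of morphisms $f'=h\circ f$ is in particular an equality of rational maps; thus $\sim_{\mathrm{iso}}$ refines $\sim_{\mathrm{bir}}$. Since the target of $\Phi$ is finite, the corollary follows once I show that $\Phi$ is injective.

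The heart of the argument is this injectivity. Suppose $(Y,f)$ and $(Y',f')$ are pairs as in the corollary with $\Phi([(Y,f)])=\Phi([(Y',f')])$, so that $(Y,f)\sim_{\mathrm{bir}}(Y',f')$ via some birational map $h\colon Y\dashrightarrow Y'$ satisfying $f'=h\circ f$ as rational maps. Here I would invoke the rigidity fact recalled just before the corollary, namely \cite[Cor. 6.3.10]{Kob} (or its algebraic form \cite[Lem. 3.2, Lem. 3.5]{JK}): since $Y$ and $Y'$ are smooth projective and both hyperbolic and of general type, the birational map $h$ extends to an isomorphism $Y\to Y'$. Consequently $h\circ f$ is a morphism, and it agrees with the morphism $f'$ on a dense open subset of $X$; because $X$ is reduced and $Y'$ is separated, two morphisms agreeing on a dense open subset coincide, so $f'=h\circ f$ holds as morphisms. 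Hence $(Y,f)\sim_{\mathrm{iso}}(Y',f')$, which is exactly the injectivity of $\Phi$ and therefore proves the corollary.

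I expect the only genuine content to lie in this equivalence-relation comparison, i.e.\ in upgrading the birational map $h$ to an isomorphism and then upgrading the identity $f'=h\circ f$ from rational maps to morphisms. The first upgrade is precisely the rigidity property of varieties that are simultaneously hyperbolic and of general type, already cited in the excerpt; the second is the standard separatedness argument for morphisms into a separated target from a reduced source. Everything else is bookkeeping about the two quotient sets, so I do not anticipate a serious obstacle beyond correctly invoking these two facts and verifying that a surjective morphism indeed qualifies as a dominant rational map in the sense of Theorem \ref{IS}.
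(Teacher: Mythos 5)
Your proposal is correct and follows essentially the same route the paper intends: apply Theorem \ref{IS} to get finiteness up to birational equivalence, then use the rigidity of smooth projective varieties that are hyperbolic and of general type to upgrade the birational map $h$ to an isomorphism and the equality $f'=h\circ f$ from rational maps to morphisms. You are in fact slightly more careful than the paper's one-sentence deduction, since you correctly note that one needs the specific birational map $h$ (not just some abstract isomorphism $Y\cong Y'$) to extend to an isomorphism compatible with $f$ and $f'$, which is exactly what the cited extension results provide.
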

This recovers Severi's Theorem \ref{Sev}.

In Corollary \ref{iso}, since we only modulo the isomorphisms, the smoothness condition for $Y$ is really important. For example, any smooth curve can have infinitely many contractions, all of them are birational equivalent but not isomorphic.

 The main purpose of this note is to consider the situation (in Corollary \ref{iso}) when $Y$ is normal, which is a weaker constraint than smoothness. More precisely, we ask the following question:
\begin{Question}\label{nor}
Let $X$ be a projective variety over $\CC$, then up to equivalence, are there only finitely many pairs $(Y,f)$, where $Y$ is a \textbf{normal} projective variety over $\CC$ that is both hyperbolic and of general type, and $f:X\to Y$ is a surjective morphism? Here, we say two pairs $(Y,f)$ and $(Y',f')$ are equivalent if there is an isomorphism $h: Y\to Y'$ such that $f'=h\circ f$.
\end{Question}
The goal of this note is to give a negative answer to the Question \ref{nor}. In other words, we will construct a projective surface $X$ and infinitely many pairs $(Y,f)$ satisfying the condition in Question \ref{nor}. The main result is as follows:
\begin{theorem}{\rm (main result)}\label{mai}
There is a smooth projective surface $X$ over $\CC$, with infinitely many pairs $(Y,f)$ where $Y$ is a \textbf{normal} projective surface that is both hyperbolic and of general type and $f:X\to Y$ is a surjective birational morphism. Moreover, any two pairs of them are non-equivalent in the sense of Question \ref{nor}.
\end{theorem}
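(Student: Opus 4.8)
The plan is to reduce the theorem to producing, on a single smooth projective surface $X$ that is hyperbolic and of general type, infinitely many pairwise distinct irreducible curves $T_1,T_2,\dots$ each of negative self-intersection. Given such curves, for each $i$ one contracts $T_i$ to a point: since the $1\times 1$ intersection matrix $(T_i^2)$ is negative definite, Grauert's criterion produces a contraction $f_i\colon X\to Y_i$ onto a normal surface in which $T_i$ is collapsed to a single point $p_i$ while $f_i$ is an isomorphism elsewhere. The surface $Y_i$ is normal and Moishezon, being a modification of the projective surface $X$; as normal Moishezon surfaces are projective, $Y_i$ is a normal projective surface and $f_i$ is a surjective birational morphism. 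Because $f_i$ is birational and $X$ is smooth projective of general type, $Y_i$ is birational to a smooth projective variety of general type and is therefore itself of general type, general type being a birational invariant as recalled in the introduction. Finally, two such pairs are automatically non-equivalent: an isomorphism $h\colon Y_i\to Y_j$ with $h\circ f_i=f_j$ would carry the locus where $f_i$ fails to be an isomorphism onto the corresponding locus of $f_j$, forcing $T_i=T_j$; hence distinct curves give non-equivalent pairs in the sense of Question \ref{nor}.

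The heart of the argument, and the step I expect to be the main obstacle, is the construction of infinitely many such curves on a hyperbolic surface of general type. Note first that $X$ can contain no rational or elliptic curve (each would give an entire curve), so every irreducible curve on $X$ has geometric genus $\geq 2$ and, in particular, $X$ is minimal. The model to keep in mind is $X=C\times C$ with $C$ a smooth projective curve of genus $g\geq 2$: such $X$ is of general type, and it is hyperbolic because any entire curve $\CC\to C\times C$ has both projections constant. I would take $C$ to be a curve carrying infinitely many irreducible correspondences of low genus relative to their degree --- for instance a compactified modular curve $\ol{X_0(N)}$ together with the Hecke correspondences $T_p\subset C\times C$, or a curve whose Jacobian is isogenous to $E\times E$ for an elliptic curve $E$, so that $\End(\Jac C)$ is infinite. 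For a correspondence $T$ of bidegrees $(d_1,d_2)$ over the two factors, adjunction on $X$ gives $T^2=2p_a(T)-2-(2g-2)(d_1+d_2)$; for the Hecke family the normalization of $T_p$ is $\ol{X_0(Np)}$, whose genus grows like a fixed multiple of $p$ while $d_1+d_2$ grows like $2p$, and a direct estimate then shows $T_p^{\,2}\to-\infty$. Thus for $N$ large one obtains infinitely many pairwise distinct irreducible curves $T_p$ with $T_p^{\,2}<0$, exactly as the first paragraph requires; verifying irreducibility together with the genus and self-intersection estimate is where the real work lies.

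It remains to check that each $Y_i$ is hyperbolic, which is the other essential point. Suppose $h\colon\CC\to Y_i$ were a nonconstant entire curve. Its image is not the single point $p_i$, so $S:=h^{-1}(p_i)$ is a discrete subset of $\CC$, and over $\CC\setminus S$ the map $h$ lifts through the isomorphism $f_i\colon X\setminus T_i\xrightarrow{\ \sim\ }Y_i\setminus\{p_i\}$ to a holomorphic map $\tilde h\colon\CC\setminus S\to X\setminus T_i$. Since $X$ is compact and hyperbolic, the open subset $X\setminus T_i$ is hyperbolically embedded in $X$, so by the big Picard (Kwack) extension theorem (cf.\ \cite{Kob}) the map $\tilde h$ extends across each point of $S$ to a holomorphic map $\hat h\colon\CC\to X$. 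As $f_i\circ\hat h=h$ is nonconstant, $\hat h$ is nonconstant, contradicting the hyperbolicity of $X$. Hence no such $h$ exists and $Y_i$ is hyperbolic. Combining the four points --- contractibility, general type, non-equivalence, and hyperbolicity --- yields the required surface $X$ together with infinitely many pairwise non-equivalent pairs $(Y_i,f_i)$, which proves the theorem.
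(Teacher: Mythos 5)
Your verifications of normality, hyperbolicity (via Kwack's big Picard extension), general type, and non\-/equivalence all work and closely parallel the paper's Proposition 2.2, but the step you yourself flag as ``where the real work lies'' fails for the example you propose, and this is the heart of the matter. On $X=\ol{X_0(N)}\times\ol{X_0(N)}$ the fibre classes $F_1,F_2$ span a sublattice of signature $(1,1)$ in $\NS(X)$, so by the Hodge index theorem the intersection form is negative semi-definite on $\langle F_1,F_2\rangle^{\perp}$; writing $T_p=(p+1)(F_1+F_2)+T_p^{0}$ with $T_p^{0}\perp F_1,F_2$ gives $T_p^{2}=2(p+1)^{2}+ (T_p^{0})^{2}$, and $-(T_p^{0})^{2}$ is the Rosati trace $\tr_{H^1}(T_p\circ T_p^{\dagger})=2\sum_f a_p(f)^2\le 8g(X_0(N))\,p$ by the Eichler--Shimura/Ramanujan bound $|a_p|\le 2\sqrt p$. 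Hence $T_p^{2}\ge 2(p+1)^{2}-8gp\to+\infty$, not $-\infty$. (Sanity check: the same formula gives $\Delta^2=2-2g$ for the diagonal.) Your adjunction computation only bounds $T_p^{2}$ from below by replacing $p_a(T_p)$ with the geometric genus $g(X_0(Np))$; in reality $T_p$ has on the order of $(p+1)^{2}$ nodes, $p_a(T_p)$ is dominated by their delta invariants, and the self-intersection is positive. Producing infinitely many irreducible negative curves on a hyperbolic surface is genuinely nontrivial (it is the subject of the bounded-negativity literature, e.g.\ for $C\times C$ with $C$ covering an elliptic curve one takes components of pullbacks of graphs of $[n]$, and proving these are negative takes real work). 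The paper sidesteps this entirely: it starts from a \emph{rational} surface $X_0$ with infinitely many $(-1)$-curves $E_i$, uses Hironaka's smoothing theorem plus Bertini to cut out a surface $X\subset X_0\times X_1$ finite over both $X_0$ and a hyperbolic $X_1$ (so $X$ is hyperbolic and of general type), and contracts the preimages $\pi^{-1}(E_i)$.

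There is a second, independent gap: Grauert's criterion yields the contraction of $T_i$ only as a normal complex-analytic space (or, by Artin, an algebraic space), and the claim that normal Moishezon surfaces are projective is false --- there exist normal proper non-projective surfaces (Schr\"oer). Concretely, projectivity of the contraction requires a nef and big $\QQ$-divisor $D$ with $D\cdot T_i=0$ whose restriction $\CO(mD)|_{T_i}$ is trivial for some $m>0$; since every curve on your hyperbolic $X$ has genus $\ge 2$, this restriction lies in a positive-dimensional $\Pic^{0}(T_i)$ and may be non-torsion for every admissible $D$, in which case $Y_i$ is not projective. The paper avoids this by producing $Y_i$ as the Stein factorization of $X\to X_0\to Z_i$, so that $Y_i$ comes equipped with a finite morphism to the smooth projective surface $Z_i$ and is automatically projective. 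As it stands, then, your proposal has the right shape for the soft parts of the argument but is missing both the construction of the surface-and-curves and the projectivity of the targets.
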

The proof of Theorem \ref{mai} will take up the next section. The proof is of course constructive. This result means that it's hard to make the smoothness condition for $Y$ weaker in Corollary \ref{iso}.\medskip

 On the other hand, the following Theorem \ref{XY} implies that (in some sense) those infinitely many pairs in Theorem \ref{mai} can not form a family! The theorem was essentially proved by \cite{Nog2} and was rewritten in \cite[Theorem 2.8]{XY}.
\begin{theorem}\label{XY}
Let $K=\CC (B)$ be the function field of a smooth projective curve $B$ over $\CC$. Let $Y$ be a normal projective variety over $K$. Let $\mathcal{Y}\to B$ be an integral model of $Y$ over $B$. Assume that one of the following conditions holds:
\begin{enumerate}[(1)]
\item for some closed point $b\in B$, the fibre $\mathcal{Y}_{b}$ is hyperbolic;
\item $Y$ is a variety of general type over $K$.
\end{enumerate}
Assume that there is a smooth projective variety $X$ over $\CC$ together with a surjective $K$-morphism $\varphi:X\times_{\CC}K\to Y$. Then $Y$ is isomorphic to the base change $T\times_{\CC}K$ for a projective variety $T$ over $\CC$.
\end{theorem}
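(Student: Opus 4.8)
The plan is to spread the data out over the curve and reduce the statement to the triviality of an integral model. Choose a nonempty open $B^{\circ}\subseteq B$ so that $\CY^{\circ}:=\CY\times_{B}B^{\circ}\to B^{\circ}$ is flat with normal fibres and so that the $K$-morphism $\varphi$ spreads out to a surjective $B^{\circ}$-morphism $\Phi\colon X\times_{\CC}B^{\circ}\to\CY^{\circ}$ restricting to $\varphi$ on the generic fibre. After shrinking $B^{\circ}$ I may moreover assume that every closed fibre $\CY^{\circ}_{b}$ is hyperbolic in case (1) — using the openness of hyperbolicity in families of compact complex spaces \cite{Kob} together with the given hyperbolic fibre — or of general type in case (2), since being of general type is preserved on very general fibres and hence, after shrinking $B^{\circ}$, on every fibre. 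Writing $T:=\CY^{\circ}_{b_{0}}$ for a base point $b_{0}\in B^{\circ}$, the desired conclusion $Y\cong T\times_{\CC}K$ is equivalent, after passing to generic fibres, to the assertion that the family $\CY^{\circ}\to B^{\circ}$ is \emph{trivial}, i.e.\ isomorphic to $T\times_{\CC}B^{\circ}$ over $B^{\circ}$. So it suffices to prove this triviality.

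The engine is an infinitesimal rigidity coming from the fact that the source family $X\times_{\CC}B^{\circ}$ is a \emph{product}. For each $b$ let $\kappa_{b}\in\Hom(T_{b}B^{\circ},H^{1}(\CY^{\circ}_{b},T_{\CY^{\circ}_{b}}))$ be the Kodaira--Spencer class of $\CY^{\circ}\to B^{\circ}$. Pulling back along the surjection $\Phi_{b}\colon X\to\CY^{\circ}_{b}$ gives the Kodaira--Spencer class, with source held fixed, of the family of morphisms $\{\Phi_{b}\}$; since $\Phi$ is globally defined over $B^{\circ}$ this obstruction to deforming $\Phi_{b}$ vanishes, so the image of $\kappa_{b}$ under $\Phi_{b}^{*}\colon H^{1}(\CY^{\circ}_{b},T_{\CY^{\circ}_{b}})\to H^{1}(X,\Phi_{b}^{*}T_{\CY^{\circ}_{b}})$ is zero. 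But $\Phi_{b}$ is a dominant morphism onto $\CY^{\circ}_{b}$, so by Stein factorisation and the normalised trace in characteristic zero $\CO_{\CY^{\circ}_{b}}$ is a split direct summand of $\Phi_{b*}\CO_{X}$; the projection formula and the low-degree terms of the Leray spectral sequence then make $\Phi_{b}^{*}$ \emph{injective} on $H^{1}$, whence $\kappa_{b}=0$ for all $b$. On the non-smooth locus of the normal fibres I would run this with reflexive tangent sheaves, or, in case (2), replace $\CY^{\circ}$ by its relative canonical model, where the fibres become canonically polarised.

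It remains to convert $\kappa\equiv0$ into an honest product and to descend to $T_{K}$. Vanishing of the Kodaira--Spencer class means the classifying map from the connected base $B^{\circ}$ to the (quasi-projective) moduli of such fibres has vanishing derivative, hence is constant; thus all fibres $\CY^{\circ}_{b}$ are isomorphic and the family is \emph{isotrivial}. To upgrade isotriviality to triviality I use the hypotheses once more: a normal projective variety that is both hyperbolic and of general type has \emph{finite} automorphism group (a Hurwitz-type statement, cf.\ Theorem \ref{g>1} and \cite{Kob}) and is rigid, and two such that are birational are already isomorphic \cite[Cor.\ 6.3.10]{Kob}, \cite[Lem.\ 3.2, Lem.\ 3.5]{JK}. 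The global surjection $\Phi$ from the \emph{constant} variety $X$ supplies a marking invariant under the finite monodromy, killing the possible twist and giving, after shrinking $B^{\circ}$, an isomorphism $\CY^{\circ}\cong T\times_{\CC}B^{\circ}$ over $B^{\circ}$. Passing to the generic fibre yields $Y\cong T\times_{\CC}K$ with $T=\CY^{\circ}_{b_{0}}$ projective, normal, hyperbolic and of general type, as required.

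The main obstacle is exactly this last passage together with the singularities of the fibres. Making the Kodaira--Spencer computation rigorous when the $\CY^{\circ}_{b}$ are merely normal, rather than smooth, requires care (reflexive sheaves, or the relative canonical model in case (2); in the hyperbolic case (1) one instead leans on Noguchi's analytic rigidity of the space of holomorphic maps into a family of hyperbolic targets, which is the source of \cite{Nog2}), and upgrading isotriviality to an actual product rather than a nontrivial twist is what forces one to exploit the \emph{constancy} of the source $X$, not merely the finiteness of maps. That the hypotheses are indispensable is clear from the failure without them: a constant family such as $X=E\times E$ surjects onto non-isotrivial families of elliptic — hence non-hyperbolic, non-general-type — targets. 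Finally, Theorem \ref{Nog} is the integrated form of the infinitesimal rigidity used above: it guarantees the finiteness of the relevant dominant maps and, with it, that the markings and automorphisms stay discrete throughout the argument.
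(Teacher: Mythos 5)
The paper does not actually prove Theorem \ref{XY}: it quotes it from \cite{Nog2} as rewritten in \cite[Theorem 2.8]{XY}, so there is no internal proof to compare you against, and I can only assess your argument on its own terms. Its global shape (spread out over $B^{\circ}$, prove isotriviality, then rigidify using the constancy of the source $X$) is the right one, but the middle step --- the Kodaira--Spencer computation --- has genuine gaps exactly where the theorem is delicate. The fibres $\CY^{\circ}_{b}$ are only normal, and first-order deformations of a singular fibre are classified by $\mathrm{Ext}^{1}(\Omega^{1}_{\CY^{\circ}_{b}},\CO_{\CY^{\circ}_{b}})$, which sits in an exact sequence $0\to H^{1}(T_{\CY^{\circ}_{b}})\to \mathrm{Ext}^{1}(\Omega^{1},\CO)\to H^{0}(\mathcal{E}xt^{1}(\Omega^{1},\CO))$: killing the $H^{1}(T)$-component says nothing about deformations of the singularities, and passing to reflexive tangent sheaves restores neither the deformation-theoretic meaning of $H^{1}$ nor the projection formula your trace-plus-Leray injectivity relies on. Likewise, ``the classifying map to the moduli space has vanishing derivative, hence is constant'' presupposes a moduli space with tangent space $H^{1}(T)$; none exists for normal hyperbolic varieties that are not of general type, and in case (1) you explicitly fall back on ``Noguchi's analytic rigidity, which is the source of \cite{Nog2}'' --- that is, on the theorem being proved. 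A smaller but real point: Brody's openness of hyperbolicity in families is openness in the Euclidean topology, so you cannot shrink $B^{\circ}$ Zariski-openly to make every closed fibre hyperbolic.

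The step you treat most briefly is the one where the hypotheses genuinely enter, and doing it honestly makes the Kodaira--Spencer discussion unnecessary. By Theorems \ref{Nog} and \ref{IS} the pairs $(Z,g)$ with $g\colon X\to Z$ surjective onto a hyperbolic or general-type target fall into finitely many equivalence classes, and for fixed $Z$ the set of surjections is finite; hence $b\mapsto(\CY^{\circ}_{b},\Phi_{b})$ takes finitely many values, a constructibility (or, in the hyperbolic case, Noguchi rigidity) argument makes it constant on a dense open set, and descent from a finite cover of $B^{\circ}$ is automatic because no nontrivial element of $\Aut(T)$ fixes a given surjection $X\to T$. This is essentially the argument of \cite{Nog2} and \cite[Theorem 2.8]{XY}; your ``marking invariant under the finite monodromy'' gestures at it, but finiteness of $\Aut(T)$ alone does not kill the twist --- you need the triviality of the stabilizer of $\Phi_{b_{0}}$, which is what the surjectivity buys.
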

In fact, if we take $X$ to be the surface in Theorem \ref{mai}, $U$ is an open subset of $B$ and $\CY\to U$ ia a family of normal hyperbolic projective complex surfaces, and take $\Phi:X\times_{K}U\to\CY$ to be a surjective $U$- morphism which can be viewed as a family of $\CC$- morphisms from $X$ to the fibres of $\CY\to U$ satisfies the condition in Theorem \ref{mai}. Then the above Theorem \ref{XY} implies that $\CY$ is actually a (locally) constant family. This means that the infinitely many morphisms $f:X\to Y$ in Theorem \ref{mai} cannot form a family so they are a bit subtle in this sense.\medskip

\noindent\textbf{Acknowledgment.} I would like to thank my advisor Xinyi Yuan for telling me this topic about hyperbolic complex spaces. The counterexample to be constructed in this note mainly came from an idea of Chenyang Xu, who was discussing with Xinyi Yuan for the counterexample of another problem. So I also want to thank Chenyang Xu for his beautiful idea. Besides, I also want to thank Ruoyi Guo, Zeyu Wang, Junyi Xie and Qizheng Yin for the discussion of some technical issues.
\section{Construction of the counterexample} \label{sec con}
In this section, we prove Theorem \ref{mai}. In other words, we'll construct a counterexample of Question \ref{nor}, which is the normal version of Corollary \ref{iso}. We mainly use algebraic geometry of surfaces, which can be found in \cite{Har}.

Let $X_{0}$ be a smooth projective surface over $\CC$ with infinitely many $(-1)$-curves. Here, a $(-1)$-curve of $X_{0}$ is an irreducible nonsingular curve $E$ on $X_{0}$ such that $E$ is isomorphic to $\BP^{1}$ and the self-intersection number $E^{2}=-1$. Such an $X_{0}$ exists, for example, by taking $X_{0}$ as $\BP^{2}$ blowing up 9 points which are in general position. See \cite[Theorem 4a]{Nag} for more details.

 Let $E_{i} (i=1,2,\cdots)$ be infinitely many different $(-1)$-curves on $X_{0}$ as above. By Castelnuovo's contraction theorem (see \cite[V, Theorem 5.7]{Har}), for each $i\in\BN^{+}$, there exists a morphism $\tau_{i}: X_{0}\rightarrow Z_{i}$ to a smooth projective surface $Z_{i}$, and a point $P_{i}\in Z_{i}$, such that $X_{0}$ is isomorphic via $\tau_{i}$ to the monoidal transformation of $Z_{i}$ with centre $P_{i}$, and $E_{i}$ is the exceptional curve.

 We need the following key lemma:
 \begin{lemma}\label{fin}
 There exists a smooth projective hyperbolic surface $X$ over $\CC$ and a finite morphism $\pi: X\to X_{0}$ from $X$ to $X_{0}$.
 \end{lemma}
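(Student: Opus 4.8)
The plan is to produce the hyperbolic surface $X$ as a finite cover of $X_0$, using the well-known principle that a sufficiently ``generic'' or carefully chosen ramified cover of a rational (or non-hyperbolic) surface can be made hyperbolic, while finiteness of the covering map $\pi$ is automatic by construction. The natural mechanism is to take $X$ to be a smooth complete intersection surface sitting inside a projective space built from sections of a high power of an ample line bundle on $X_0$, or alternatively to realize $X_0$ as (a blow-up related to) $\BP^2$ and take a cyclic cover branched along a high-degree very general hypersurface. First I would fix an ample line bundle $L$ on $X_0$ and consider the finite morphisms to $X_0$ arising from cyclic covers defined by a section $s \in H^0(X_0, L^{\otimes d})$ with $d \gg 0$; such a cover $\pi\colon X \to X_0$ is finite of degree equal to the covering order, and for a smooth choice of branch divisor $X$ is smooth.

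\medskip

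The heart of the argument is establishing \emph{hyperbolicity} of $X$, and this is the step I expect to be the main obstacle. The key tool here is the circle of results on hyperbolicity of generic high-degree hypersurfaces and their covers (in the spirit of the Kobayashi conjecture), together with the fact that entire curves $h\colon \CC \to X$ must push forward under the finite map $\pi$ to entire curves in $X_0$; conversely, ramification along a very ample, very general divisor of large degree forces algebraic degeneracy and ultimately the nonexistence of nonconstant entire curves. Concretely, I would invoke one of the available effective hyperbolicity theorems for branched covers or complete intersections: for a very general member of a sufficiently positive linear system, the resulting smooth cover $X$ carries no nonconstant holomorphic image of $\CC$. The delicate point is checking that the chosen $X_0$ (with its infinitely many $(-1)$-curves, e.g.\ $\BP^2$ blown up at $9$ general points) admits such a positive enough branch system; since $X_0$ is a fixed projective surface with ample line bundles, taking $d$ large enough supplies the needed positivity, and the genericity of $s$ is a nonempty (Zariski-open) condition because smoothness and hyperbolicity of the very general cover each cut out such loci.

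\medskip

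The remaining steps are then routine. Finiteness of $\pi$ follows because cyclic covers associated to line bundles are finite morphisms by construction (the cover is $\Spec$ of a finite $\CO_{X_0}$-algebra), and smoothness of $X$ follows from the smoothness of the branch divisor by the standard local computation for cyclic covers. The projectivity of $X$ is inherited from that of $X_0$ via the finite (hence projective) morphism $\pi$. Thus the only genuine content is the hyperbolicity input, and I would isolate it as a citation to the existence of hyperbolic finite covers of a given projective surface — a result that, for large enough covering degree and very general branch divisor, guarantees $X$ has no entire curves while keeping $\pi\colon X \to X_0$ finite, which is exactly what Lemma \ref{fin} asserts.
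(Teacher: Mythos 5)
Your construction takes a genuinely different route from the paper, and it has a real gap at exactly the step you yourself flag as ``the main obstacle'': the hyperbolicity of the cyclic cover. The difficulty is that $X_0$ (a blow-up of $\BP^2$ at nine points) is a rational surface, saturated with entire curves, so pushing an entire curve $h:\CC\to X$ down to $\pi\circ h:\CC\to X_0$ yields no contradiction whatsoever; what you would actually need is an orbifold/logarithmic hyperbolicity statement for the pair $(X_0,(1-\tfrac{1}{d})D)$ with $D$ a very general member of $|L^{\otimes d}|$, i.e.\ that every entire curve in $X_0$ meets $D$ with enough multiplicity to obstruct lifting to the cover. Such statements belong to the circle of the Kobayashi conjecture and its variants, are known only in special situations (chiefly for $\BP^n$, under strong degree and genericity hypotheses), and you give no citation that covers cyclic covers of an arbitrary fixed rational surface. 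Note also that even algebraic hyperbolicity is not automatic: the reduced ramification locus of the cover maps isomorphically onto $D$ and sits inside $X$, so $D$ itself must be hyperbolic, and one must separately control the preimages of the infinitely many rational curves on $X_0$. As written, the entire content of the lemma is delegated to an unnamed theorem that may not exist in the required generality.

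The paper sidesteps all of this with a more elementary device: fix any smooth projective surface $X_1$ already known to be hyperbolic and of general type, and inside $X_0\times X_1$ cut out a smooth surface $X$ by two generic sections of high powers of an ample line bundle, chosen (via a theorem of Hironaka combined with Bertini) so that both projections restricted to $X$ have finite fibres and are therefore finite morphisms. Hyperbolicity of $X$ is then free: a nonconstant entire curve in $X$ would have nonconstant image in $X_1$ because the fibres of $X\to X_1$ are finite, contradicting the hyperbolicity of $X_1$. If you want to salvage your approach you would need a precise reference establishing Kobayashi hyperbolicity of very general high-degree cyclic covers of this particular $X_0$; otherwise the ``correspondence with a known hyperbolic surface'' trick is the efficient path, since it requires no hyperbolicity theorem beyond the existence of a single hyperbolic surface $X_1$.
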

 \begin{proof}
  Let $X_{1}$ be any smooth projective complex surface which is both hyperbolic and of general type. We claim that there is a smooth projective surface $X$ with two finite morphisms to $X_{1}$ and $X_{0}$.

In fact, this is a direct consequence of a theorem of Hironaka (cf. \cite[Theorem 2.6]{Hir}) combined with Bertini's theorem. The Hironaka's theorem is as follows: If $f:X\to Y$ is a morphism between two projective complex varieties $X$ and $Y$, $L$ is an ample line bundle of $X$. Then there exists an integer $m_{0}\geq0$ such that for any $m\geq m_{0}$, the generic section of the sheaf $L^{\otimes m}$ does not vanish identically on any positive-dimensional irreducible component of any fibre of $f$.

To derive the claim at the beginning of the proof, we apply the above Hironaka's theorem to the two projection maps $p_{i}:X_{0}\times X_{1}\to X_{i} (i=0, 1)$. Let $L$ be an ample line bundle on $X_{0}\times X_{1}$. By the theorem above, for $m$ sufficient large, a generic section $s$ of $L^{\otimes m}$ does not vanish on any fibre of the two projections $p_{i} (i=0, 1)$. This means that ${\rm div}(s)$ is a divisor of $X_{0}\times X_{1}$ which intersects each fibre of $p_{i} (i=0, 1)$ properly. Combined with the Bertini's theorem, we can also guarantee that ${\rm div}(s)$ is smooth. Then repeat the above process to the two projections $p_{i}\big|_{{\rm div}(s)} (i=0, 1)$, we can find a generic section $s'$ of line bundle $(L\big|_{{\rm div}(s)})^{\otimes m'}$ (for $m'$ sufficiently large) such that ${\rm div}(s')$ is a smooth  divisor of ${\rm div}(s)$ which intersect each positive-dimensional irreducible component of each fibre of $p_{i}\big|_{{\rm div}(s)} (i=0, 1)$ properly. Then since $X_{i} (i=0, 1)$ are surfaces, the fibres of two projections $p_{i}\big|_{{\rm div}(s')} (i=0, 1)$ are all finite. This means that $p_{i}\big|_{{\rm div}(s')} (i=0, 1)$ are finite as they are projective. Thus we can take $X={\rm div}(s')$.

To prove the lemma, we just need to notice that $X$ is already hyperbolic and of general type. This is because $X$ has a finite morphism to $X_{1}$, and $X_{1}$ is hyperbolic and of general type. More precisely, the hyperbolicity of $X$ is trivial. For the property of being general type, we can use the fact that for any $n\geq 1$, the natural map $H^{0}(X_{1}, \omega_{X_{1}}^{\otimes n})\to H^{0}(X, \omega_{X}^{\otimes n})$ is injective, where $\omega_{X}, \omega_{X_{1}}$ denotes the canonical sheaf of $X, X_{1}$.

   Thus, this $X$ is what we want.
 \end{proof}
 We choose such $X$ and $\pi: X\to X_{0}$ as in the Lemma \ref{fin}. For each $i\in\BN^{+}$, consider the the composition morphism $\tau_{i}\circ\pi: X\to Z_{i}$. By Stein factorization theorem (cf., \cite[Corollary 11.5]{Har}), it can be factored into $\pi_{i}\circ f_{i}$, where $\pi_{i}:Y_{i}\to Z_{i}$ is a finite morphism and $f_{i}$ is a projective morphism with connected fibres. Moreover, $f_{i*}\mathcal{O}_{X}=\mathcal{O}_{Y_{i}}$. In other words, we have the following commutative diagram:
 \begin{displaymath}
 \xymatrix{X\ar[r]^{\pi}\ar[d]_{f_{i}}& X_{0}\ar[d]^{\tau_{i}}\\
   Y_{i}\ar[r]_{\pi_{i}}& Z_{i}}
 \end{displaymath}
 In this case, since $\tau_{i}\circ\pi: X\to Z_{i}$ is generically finite, we see that $f_{i}$ must be a birational morphism. More explicitly, let $S_{i}=\pi_{i}^{-1}(P_{i})$ be the fibre of $P_{i}$ under $\pi_{i}$, $E_{i}'=\pi^{-1}(E_{i})$, then $f_{i}$ induces an isomorphism $X\setminus E_{i}'\to Y_{i}\setminus S_{i}$. This means that the singular locus of the surface $Y_{i}$ is contained in the finite set $S_{i}$ as $X$ is smooth. Note that these $Y_{i}$ can be singular (actually, they must be singular, for otherwise, $Y_{i}$ is smooth for some $i$, then $f_{i}$ can be factored into a finite composition of monoidal transformations by \cite[V, Corollary 5.4]{Har}, so $X$ contains at least one $(-1)$-curve, but this contradicts to the fact that $X$ is hyperbolic).

 Now we can prove the following Proposition, which implies that the $X$ and $f_{i}:X\to Y_{i}$ constructed above actually gives the example in Theorem \ref{mai}.
 \begin{prop}\label{alm}
 All $Y_{i}$\rm($i\in\BN^{+}$) are normal and hyperbolic.
 \end{prop}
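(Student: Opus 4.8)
The plan is to treat normality and hyperbolicity separately, and in both cases to exploit that $f_i\colon X\to Y_i$ is a birational morphism which is an isomorphism over the complement of the finite set $S_i$, while $X$ is smooth, compact and hyperbolic.

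For normality I would argue directly from the defining property of the Stein factorization, $f_{i*}\mathcal{O}_X=\mathcal{O}_{Y_i}$, together with the smoothness (hence normality) of $X$. For any open $U\subseteq Y_i$ one has $\mathcal{O}_{Y_i}(U)=\mathcal{O}_X(f_i^{-1}(U))$, and the right-hand side is integrally closed in the common function field $K(X)=K(Y_i)$ because $X$ is normal. Passing to the filtered colimit over open neighbourhoods of a point $y\in Y_i$ exhibits the local ring $\mathcal{O}_{Y_i,y}$ as a filtered union of subrings of $K(Y_i)$ each integrally closed in $K(Y_i)$, and such a union is again integrally closed. Hence every local ring of $Y_i$ is integrally closed and $Y_i$ is normal. (Equivalently, one may invoke the standard fact that the Stein factorization of a morphism out of a normal variety is normal.)

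For hyperbolicity, suppose toward a contradiction that there is a nonconstant entire curve $h\colon\CC\to Y_i$. Since $f_i$ restricts to an isomorphism $X\setminus E_i'\xrightarrow{\sim}Y_i\setminus S_i$, and $X\setminus E_i'$ is an open subset of the hyperbolic manifold $X$ (so carries no nonconstant entire curve), the image of $h$ cannot lie in $Y_i\setminus S_i$; as $h$ is nonconstant, the preimage $\Sigma:=h^{-1}(S_i)\subset\CC$ is therefore a proper analytic subset, hence a discrete closed set. Composing $h|_{\CC\setminus\Sigma}$ with the inverse isomorphism $Y_i\setminus S_i\xrightarrow{\sim}X\setminus E_i'\hookrightarrow X$ produces a holomorphic map $\tilde h\colon\CC\setminus\Sigma\to X$ into the compact hyperbolic surface $X$, with $f_i\circ\tilde h=h$ on $\CC\setminus\Sigma$.

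The crux is to extend $\tilde h$ across the punctures $\Sigma$. Here I would invoke the big Picard-type extension theorem for maps into complete hyperbolic spaces: since $X$ is compact and (Brody, hence Kobayashi) hyperbolic, it is complete hyperbolic, so every holomorphic map from a punctured disc $\Delta^{*}$ into $X$ extends holomorphically over the puncture (Kwack's theorem; cf.\ \cite{Kob}). Applying this on a small punctured coordinate disc about each point of $\Sigma$ and using uniqueness of holomorphic extension to glue, $\tilde h$ extends to an entire map $\bar h\colon\CC\to X$. As $X$ is hyperbolic, $\bar h$ is constant, so $\tilde h$ is constant on the dense open set $\CC\setminus\Sigma$; then $h=f_i\circ\tilde h$ is constant there, and by continuity on all of $\CC$, contradicting the choice of $h$. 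I expect this extension step to be the main obstacle, both in pinning down the precise form of the extension theorem applicable to the a priori only Brody-hyperbolic compact surface $X$ and in verifying that the local extensions patch to a single entire map; the rest of the argument is formal.
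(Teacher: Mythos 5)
Your proposal is correct and follows the same overall strategy as the paper: normality is deduced from $f_{i*}\mathcal{O}_{X}=\mathcal{O}_{Y_{i}}$ together with the normality of $X$ (the paper simply cites \cite[Example 2.1.15]{Laz} for the fact you verify by hand via integrally closed rings of sections --- note that your argument implicitly uses $K(Y_i)=K(X)$, which holds here because $f_i$ is birational), and hyperbolicity is proved by lifting a hypothetical nonconstant entire curve $h\colon\CC\to Y_{i}$ to $X$ over the complement of the discrete set $h^{-1}(S_{i})$ and then extending across the punctures. The one substantive difference is the tool used for the extension step, which you rightly single out as the crux: the paper argues with the classical big Picard theorem, ruling out an essential singularity of $h_{0}$ at each puncture by observing that continuity of $h$ there would confine the image of a small punctured neighbourhood to a small open subset of $X$; you instead invoke Kwack's extension theorem for holomorphic maps of the punctured disc into a compact (hence complete) hyperbolic space. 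The remark immediately following the paper's proof explicitly flags this stronger extension property (\cite[Theorem (6.3.10)]{Kob}) as an alternative to the elementary big Picard argument, so your route is precisely the one the author mentions but does not take; it is arguably the cleaner of the two, since it avoids the somewhat delicate bookkeeping of what the classical big Picard theorem yields for maps into a compact surface, at the cost of quoting a deeper theorem.
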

 \begin{proof}
 Since $f_{i*}\mathcal{O}_{X}=\mathcal{O}_{Y_{i}}$, and $X$ is smooth (hence normal), the normality of $Y_{i}$ follows from \cite[Example 2.1.15]{Laz}. Now let's prove the hyperbolicity of $Y_{i}$.

 We prove by contradiction. Assume that for some $i\in\BN^{+}$, $Y_{i}$ is not hyperbolic. Then by definition, there exists a nonconstant holomorphic map $h:\CC\to Y_{i}$. We will show that $h$ can be lifted to a nonconstant holomorphic map $\tilde{h}:\CC\to X$, which gives a contradiction since $X$ is hyperbolic.

 Since $h$ is nonconstant and $S_{i}$ is a finite set, $h(\CC)\neq S_{i}$, $h^{-1}(S_{i})$ is a (possibly empty) discrete subset of $\CC$. By the above description, $h$ induces a holomorphic map $\CC\setminus h^{-1}(S_{i})\to Y_{i}\setminus S_{i}\to X\setminus E_{i}'\to X$. Where the first map is the restriction of $h$, the second map is the inverse of $f_{i}\big|_{(X\setminus E_{i}')}$, and the third map is the inclusion. We denote this composition map $h_{0}:\CC\setminus h^{-1}(S_{i})\to X$. We are going to extend $h_{0}$ to a holomorphic map $\tilde{h}:\CC\to X$ in the following.

 For any point $z\in h^{-1}(S_{i})$ (if exists), since $X$ is compact under the complex topology, there are two possibilities: either $z$ is a removable singularity of $h_{0}$, in this case $h_{0}$ can be extended directly to $z$; or $z$ is an essential singularity of $h_{0}$, but in this case, since $h$ is nonconstant and holomorphic at $z$, for a sufficiently small neighbourhood $U$ of $z$ (under the complex topology), $h(U)$ will be contained in a sufficiently small neighbourhood of $h(z)$, so $h_{0}(U\setminus\{z\})$ will be contained in a sufficiently small open subset of $X$ (under the complex topology) as $f_{i}:X\to Y_{i}$ is finite. But on the other hand, by the big Picard's theorem, for any neighbourhood $U$ of $z$, $h_{0}(U\setminus\{z\})$ cannot be contained in a very small poen subset of $X$ (under the complex topology). This yields a contradiction.

 By the above argument, we know that all the points $z\in h^{-1}(S_{i})$ are removable singularities of $h_{0}$, thus $h_{0}$ can be extended to a holomorphic map $\tilde{h}:\CC\to X$. This ends the proof.
 \end{proof}
 \begin{remark}
 The key of the above proof is to extend the holomorphic map $h_{0}$ to an entire curve on $X$. In this case we just need to use the classical big Picard's theorem with some easy argument. But in fact, there is a much stronger result about the extension property of hyperbolic complex spaces; see \cite[Theorem (6.3.10)]{Kob} for more details.
 \end{remark}
 From Proposition \ref{alm} we can easily complete the proof of the main Theorem \ref{mai}:
 \begin{proposition}
 For $i\in \BN^{+}$, $X$ and the pairs $(Y_{i},f_{i})$ give the desired example in Theorem \ref{mai}.
 \end{proposition}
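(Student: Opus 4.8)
The plan is to assemble the pieces already established into the three defining properties required by Theorem \ref{mai}: that the $Y_i$ are normal projective surfaces which are both hyperbolic and of general type, that each $f_i\colon X\to Y_i$ is a surjective birational morphism, and that infinitely many of the pairs are pairwise non-equivalent. Normality and hyperbolicity are exactly the content of Proposition \ref{alm}, so these can be cited directly. Surjectivity and birationality of $f_i$ were recorded in the paragraph preceding Proposition \ref{alm}: since $\tau_i\circ\pi$ is generically finite onto the surface $Z_i$ and $f_i$ has connected fibres by the Stein factorization, $f_i$ is birational, and it induces an isomorphism $X\setminus E_i'\xrightarrow{\sim} Y_i\setminus S_i$. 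Surjectivity is automatic since $f_i$ is a dominant projective (hence closed) morphism.

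Next I would verify that each $Y_i$ is of general type. This follows because $X$ is of general type (established in Lemma \ref{fin}) and $f_i\colon X\to Y_i$ is a birational morphism of projective varieties; general type is a birational invariant, so $Y_i$ is of general type as well. One should note $Y_i$ is a projective surface: it is projective as the image of the projective morphism arising from Stein factorization mapping finitely to $Z_i$, and it is a surface since $f_i$ is birational onto it from the surface $X$.

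The genuinely substantive step, and the one I expect to be the main obstacle, is proving that infinitely many of the pairs $(Y_i,f_i)$ are pairwise non-equivalent. Equivalence of $(Y_i,f_i)$ and $(Y_j,f_j)$ means an isomorphism $h\colon Y_i\to Y_j$ with $h\circ f_i=f_j$. The natural strategy is to distinguish the $Y_i$ by a discrete invariant that is preserved under isomorphism. A promising candidate is the analytic or formal singularity type of $Y_i$ at the point $S_i$, or equivalently the structure of the contracted curve $E_i'=\pi^{-1}(E_i)$ together with its self-intersection data upstairs; since $\pi$ is finite the configuration $E_i'$ and the local geometry of the contraction $f_i$ near $S_i$ encode the original $(-1)$-curve $E_i$. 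If two pairs were equivalent, then $h$ would carry $S_i$ to $S_j$ and, via $h\circ f_i=f_j$, would force $\pi^{-1}(E_i)=\pi^{-1}(E_j)$ inside $X$, hence $E_i=E_j$ (as $\pi$ is finite and surjective onto $X_0$). Since the $E_i$ were chosen to be distinct $(-1)$-curves, this would contradict $i\neq j$ for all but finitely many indices.

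Concretely, I would argue as follows. Suppose $(Y_i,f_i)$ and $(Y_j,f_j)$ are equivalent via $h\colon Y_i\xrightarrow{\sim} Y_j$ with $f_j=h\circ f_i$. The exceptional locus of $f_i$ (the locus where $f_i$ fails to be a local isomorphism) is precisely $E_i'=\pi^{-1}(E_i)$, and similarly for $f_j$; an isomorphism $h$ intertwining the two morphisms from the common source $X$ must identify these exceptional loci, giving $\pi^{-1}(E_i)=\pi^{-1}(E_j)$. Because $\pi\colon X\to X_0$ is finite and surjective, a closed curve on $X_0$ is determined by its preimage, so $E_i=E_j$ as curves on $X_0$, forcing $i=j$. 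Hence distinct indices give non-equivalent pairs, and we obtain infinitely many pairwise non-equivalent examples. The only care needed is to confirm that the exceptional locus of $f_i$ is genuinely $\pi^{-1}(E_i)$ and nothing more — this is where the explicit description $f_i\colon X\setminus E_i'\xrightarrow{\sim}Y_i\setminus S_i$ from the construction does the work, since it pins down the non-isomorphism locus exactly. With these observations in place, the proof of Theorem \ref{mai} is complete.
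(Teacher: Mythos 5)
Your proposal is correct and follows essentially the same route as the paper: the paper's proof likewise distinguishes the pairs by observing that $f_i$ is an isomorphism off $E_i'=\pi^{-1}(E_i)$ and contracts exactly $E_i'$, so an equivalence $h\circ f_i=f_j$ would force $E_i'=E_j'$ and hence $E_i=E_j$, and then cites Proposition \ref{alm} for normality and hyperbolicity. Your additional check that $Y_i$ is of general type (via birational invariance and the fact that $f_i$ is birational) is a detail the paper leaves implicit, and it is handled correctly.
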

 \begin{proof}
 Since $f_{i}:X\to
 Y_{i}$ contracts the set $E_{i}'$ to a finite set $S_{i}$ of $Y_{i}$ and maps $X\setminus E_{i}'$ isomorphicly to $Y_{i}\setminus S_{i}$, any two of these pairs $(Y_{i},f_{i})$ are not equivalent in the sense of Theorem \ref{mai} as the sets $E_{i}'$ are different. Combine this fact and Proposition \ref{alm}.
 \end{proof}

\end{document}